\DeclareMathOperator{\Hom}{Hom}
\DeclareMathOperator{\id}{Id}
\newtheorem{lemma}{Lemma}[section]
\newtheorem{prop}[lemma]{Proposition}
\newtheorem*{prop*}{Proposition}
\newtheorem{prop_intro}{Proposition}
\newtheorem{thm_intro}[prop_intro]{Theorem}
\theoremstyle{definition}
\newtheorem{rem}[lemma]{Remark}
\theoremstyle{definition}
\definecolor{darkgreen}{cmyk}{1,0,1,.2}
\newcommand{\R} {\ensuremath {\mathbb{R}}}
\newcommand{\calW} {\ensuremath {\mathcal{W}}}
\newcommand{\calU} {\ensuremath {\mathcal{U}}}
\newcommand{\calV} {\ensuremath {\mathcal{V}}}
\renewcommand{\phi}{\varphi}
\newcommand{\agg}[1]{{\color{red} \textsf{[[AM: #1]]}}}
\begin{document}

\title[]{A remark on the Mayer-Vietoris \\double complex for singular cohomology}

\author[]{R. Frigerio}
\address{Dipartimento di Matematica, Universit\`a di Pisa, Largo B. Pontecorvo 5, 56127 Pisa, Italy}
\email{roberto.frigerio@unipi.it}
\thanks{}

\author[]{A. Maffei}
\address{Dipartimento di Matematica, Universit\`a di Pisa, Largo B. Pontecorvo 5, 56127 Pisa, Italy}
\email{andrea.maffei@unipi.it}
\thanks{}

\keywords{}
\begin{abstract}
Given an open cover of a paracompact topological space $X$, there are two natural ways to construct a map from the
cohomology of the nerve of the cover to the cohomology of $X$. 
One of them is based on a partition of unity, and is more topological in nature, while the other one relies
on  the Mayer-Vietoris double complex, and has a more  algebraic flavour.
In this paper we
prove that these two maps coincide, thus answering a question posed by N.~V.~Ivanov.
\end{abstract}
\maketitle

Let $X$ be a paracompact space, and let $\calU=\{U_i\}_{i\in I}$ be an open cover of $X$. We denote by $N(\calU)$ the \emph{nerve} of $\calU$, i.e.~the simplicial set having $I$ as set of vertices, in which a finite subset
$\{i_0,\ldots,i_k\}\subseteq I$ spans a simplex if and only if $U_{i_0}\cap\ldots\cap U_{i_k}\neq \emptyset$. As usual, we endow the geometric realization $|N(\calU)|$ 
of $N(\calU)$ with the weak topology associated to the natural CW structure of $|N(\calU)|$.

Any partition of unity $\Phi=\{\varphi_i\colon X\to\R\}_{i\in I}$ subordinate to $\calU$ induces a map
$$
f_\Phi\colon X\to |N(\calU)|\, ,\qquad f_\Phi(x)=\sum_{i\in I} \varphi_i(x)\cdot i\ .
$$
Moreover, the homotopy class of $f_\Phi$ does not depend on the chosen partition of unity $\Phi$. 
Indeed, if $\Psi$ is another partition of unity, then we have a well-defined homotopy $tf_\Psi+(1-t)f_\Phi$ between $f$ and $g$. 
Therefore, if $R$ is any ring with unity, the map $f_\Phi$ induces
a map
$$
f^*=f_\Phi^*\colon H^*(|N(\calU)|,R)\to H^*(X,R)\ ,
$$
which does not depend on the choice of $\Phi$. 
Throughout this paper, we fix a ring with unity $R$, and for any topological space $Y$ we denote by $C^*(Y)=C^*(Y,R)$ (resp.~$H^*(Y)=H^*(Y,R)$)
the singular cochain complex (resp.~the singular cohomology algebra) of $Y$ with coefficients in $R$. 

There is another natural way to define a map from the (simplicial) cohomology of $N(\calU)$ to the singular cohomology of $X$.  
Let $C^{*,*}(\calU)$ be the Mayer-Vietoris
double complex associated to $\calU$, i.e.~for every $(p,q)\in\mathbb{N}^2$ let 
$$
C^{p,q}(\calU)=\prod_{\underline{i}\in I_p} C^q(U_{\underline{i}})\ ,
$$
where $I_p$ denotes the set of ordered $(p+1)$-tuples $(i_0,\ldots,i_p)\in I^{p+1}$ such that
$
U_{\underline{i}}:=U_{i_0}\cap\ldots\cap U_{i_p}\neq \emptyset
$ (in particular, $I_0=\{i\in I\, |\, U_i\neq \emptyset\}$). 
We refer the reader to Section~\ref{double:sec} for the precise
definition of this double complex. 

To the double complex $C^{*,*}(\calU)$ there is associated 
the \emph{total complex} $T^*$, and we have maps
$$\alpha_X\colon H^*(X)\to H^*(T^*)\, \ ,\qquad\beta\colon H^*(N(\calU))\to H^*(T^*)$$
from the singular cohomology of $X$ to the cohomology of $T^*$ and from the simplicial cohomology of $N(\calU)$ to the cohomology of $T^*$.
Moreover, the map $\alpha$ turns out to be an isomorphism (see Section~\ref{double:sec}).

Let now
$\nu\colon H^*(|N(\calU)|)\to H^*(N(\calU))$ be the canonical isomorphism between the simplicial cohomology of $N(\calU)$ and the singular cohomology of its geometric realization
(see Section~\ref{simpl:sing}).
By setting $\eta=\alpha_X^{-1}\circ \beta\circ \nu$ we have thus defined a map
$$
\eta\colon H^*(|N(\calU)|)\to H^*(X)\ .
$$

The main result of this paper shows that the maps $f^*$ and $\eta$ coincide:

\begin{thm_intro}\label{main:thm}
The maps
$$
f^* \colon H^*(|N(\calU)|)\to  H^*(X)\, ,\quad \eta\colon H^*(|N(\calU)|)\to  H^*(X)
$$
coincide.
\end{thm_intro}

Theorem~\ref{main:thm} answers a question posed by Ivanov in~\cite[page 1113]{Ivanov} and in~\cite[page 71]{ivanov3}.

\section{The Mayer-Vietoris double complex}\label{double:sec}
Let $\calU=\{U_i\}_{i\in I}$ be an open cover of the topological space $X$. 
We now thoroughly describe the horizontal and the vertical differentials of the double complex $C^{*,*}(\calU)$ defined in the introduction, also fixing the notation we will need later.

If $\varphi\in C^{p,q}(\calU)$ and $\underline{i}\in I_p$, then we   denote by $\varphi_{\underline{i}}$ the projection of $\varphi$
on $C^q(U_{\underline{i}})$.
For every $(p,q)\in\mathbb{N}^2$ we denote by 
$$
\delta^{p,q}_v\colon C^{p,q}(\calU)\to C^{p,q+1}(\calU)
$$ 
the ``vertical'' differential which restricts to the usual differential $C^{q}(U_{\underline{i}})\to C^{q+1}(U_{\underline{i}})$ for every $\underline{i}\in I_p$,
and by 
$$
\delta_h^{p,q}\colon C^{p,q}(\calU)\to C^{p+1,q}(\calU)
$$
the ``horizontal'' differential such that, for every $\underline{i}=(i_0,\ldots,i_{p+1})\in I_{p+1}$ and every $\varphi\in C^{p,q}(\calU)$, 
\begin{equation}\label{eq:dh}
(\delta_h^{p,q}(\varphi))_{\underline{i}}=\sum_{k=0}^{p+1}(-1)^k \big(\varphi_{(i_0,\ldots,\hat{i}_k,\ldots,i_{p+1})}\big)|_{U_{\underline{i}}}\ .
\end{equation}
We augment the double complex $C^{*,*}(\calU)$ as follows. We define $C_{q}^\calU$ as the subcomplex of the singular  
chain complex $C_q(X)$ generated 
(over $R$) by those singular
simplices $s\colon \Delta^q\to X$ such that $s(\Delta^q)$ is contained in $U_i$ for some $i\in I$. We then set 
$C^{-1,q}(\calU)=C^{q}_\calU=\Hom (C_{q}^\calU,R)$. The usual boundary maps of the complex $C_*^\calU$ induce dual coboundary maps, which
endow $C^{*}_\calU$ with the structure of a complex. The inclusion of the complex $C_*^\calU$ in the full complex of singular chains induces a map
of complexes $\widetilde \gamma: C^*(X)\to C^*_\calU$. It is known that the map $\gamma$ induced in cohomology 
is an isomorphism (see e.g.~\cite[Proposition 2.21]{Hat}) and we will identify the singular cohomology of $X$ with the cohomology
of the complex $C^{q}_\calU$ via $\gamma$. 
The augmentation maps 
$\delta^{-1,q}\colon C^{-1,q}(\calU)\to C^{0,q}(\calU)$ are defined by setting, for every $i\in I_0$,
$$
(\delta^{-1,q}(\varphi))_i=\varphi|_{U_i}\ .
$$

In order to define the augmentation of the vertical complexes, we consider the Cech complex given by $C^{p,-1}(\calU)=\widecheck{C}^p(\calU)=\prod_{\underline{i}\in I_p} R$, 
with boundary maps defined as in formula \eqref{eq:dh}. We then define the augmentation maps 
$\delta^{p,-1}:{C}^{p,-1}(\calU)\to C^{p,0}(\calU)$ by setting 
$$
(\delta^{p,-1} (\varphi))_{\underline{i}}(s)=\varphi_{\underline{i}}\ \in\ R
$$
for every $\varphi\in {C}^{p,-1}(\calU)$, every $\underline{i}=(i_0,\ldots,i_p)\in I_p$ and every singular simplex 
$s\colon \Delta^0\to U_{i_0}\cap\ldots \cap U_{i_p}$.

\begin{rem}
The complex $\widecheck{C}^*(\calU)$ computes the {Cech cohomology} of the cover $\calU$ with coefficients in the \emph{constant} presheaf $R$. 
Such cohomology, which is usually denoted by $\widecheck{H}(\calU)$, 
is tautologically isomorphic to the simplicial cohomology of the nerve $N(\calU)$. 
It is costumary
to rather study the Cech cohomology of $\calU$ with coefficients in the \emph{locally constant} sheaf $R$. However
this cohomology does not always coincide with the cohomology
of $N(\calU)$. They coincide, for example, under the assumption that every $U_{\underline{i}}$, $i\in I_p$, $p\in\mathbb{N}$, is path connected.
\end{rem}

In the next lemma we prove that the rows of the augmented double complex are exact. 
\begin{lemma}\label{exactrows:lemma}
 For every $q\in\mathbb{N}$, the complex
 $$
 \xymatrix{
 0 \ar[r] & C^{-1,q}(\calU) \ar[r]^{\delta_h^{-1,q}} & C^{0,q}(\calU) \ar[r]^{\delta_h^{0,q}}  & \cdots \ar[r]^{\delta_h^{p-1,q}} & C^{p,q}(\calU) \ar[r]^{\delta_h^{p,q}} & \cdots
 }
 $$
is exact.
 \end{lemma}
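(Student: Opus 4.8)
The plan is to construct an explicit contracting homotopy for each row of the augmented double complex. Fix $q \in \mathbb{N}$, so that all the cochain groups in sight are of the form $C^q(U_{\underline{i}})$ (or $C^q_\calU$ in the augmentation spot $p=-1$), and the differentials are the horizontal maps $\delta_h^{p,q}$ from formula \eqref{eq:dh}. Since $X$ is paracompact, we may choose a partition of unity $\Phi = \{\varphi_i\}_{i\in I}$ subordinate to $\calU$. The idea is the standard one: the partition of unity lets us "average" a cochain on $U_{\underline i}$ back down the Čech direction. Concretely, for $p \geq 0$ I would define
$$
h^{p,q}\colon C^{p,q}(\calU) \to C^{p-1,q}(\calU)
$$
by a formula of the shape
$$
(h^{p,q}(\psi))_{(i_0,\ldots,i_{p-1})} \;=\; \sum_{j \in I} \varphi_j \cdot \big(\psi_{(j,i_0,\ldots,i_{p-1})}\big)\big|_{U_{(i_0,\ldots,i_{p-1})}}\ ,
$$
with the understanding that the term indexed by $j$ is zero unless $(j,i_0,\ldots,i_{p-1}) \in I_p$, i.e. unless $U_j \cap U_{(i_0,\ldots,i_{p-1})} \neq \emptyset$; since $\supp \varphi_j \subseteq U_j$, the product $\varphi_j \cdot \psi_{(j,i_0,\ldots,i_{p-1})}$ extends by zero to a well-defined cochain on $U_{(i_0,\ldots,i_{p-1})}$, and local finiteness of $\{\supp\varphi_j\}$ guarantees the sum makes sense on each singular simplex. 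One checks separately that $h^{0,q}$ lands in $C^{-1,q}(\calU) = C^q_\calU$: given $\psi \in C^{0,q}(\calU)$, the cochain $\sum_j \varphi_j\cdot \psi_j$ is defined on singular simplices whose image lies in some $U_i$, which is exactly $C^q_\calU$.

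The main computation is then to verify the homotopy identity
$$
\delta_h^{p-1,q}\circ h^{p,q} \;+\; h^{p+1,q}\circ \delta_h^{p,q} \;=\; \mathrm{id}_{C^{p,q}(\calU)}
$$
for all $p \geq 0$, together with the edge case at $p=0$ linking up with the augmentation $\delta_h^{-1,q}$. This is a direct expansion: plugging the definition of $\delta_h$ from \eqref{eq:dh} into both composites, one gets a double sum over an inserted index $j$ and a deleted index $k$; the $k=0$ term of $h^{p+1,q}\circ\delta_h^{p,q}$ (deleting the freshly inserted $j$) collapses, using $\sum_j \varphi_j = 1$, to the identity, and every remaining term of that composite cancels in pairs against the corresponding term of $\delta_h^{p-1,q}\circ h^{p,q}$ after reindexing and matching signs $(-1)^k$ vs $(-1)^{k-1}$. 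The only bookkeeping subtlety is keeping track of which tuples actually lie in $I_p$ versus $I_{p+1}$, but the support condition on $\varphi_j$ makes all the "missing" terms vanish automatically, so one can carry out the algebra formally as if every tuple were admissible.

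The part I expect to be the genuine obstacle — or at least the place demanding care — is not the cancellation algebra but checking that $h^{p,q}$ is well-defined as a map of $R$-modules into the correct group: namely that $\varphi_j\cdot\psi_{(j,\,\cdot\,)}$, a priori only defined on simplices in $U_j \cap U_{\underline i}$, extends by zero to a genuine singular cochain on $U_{\underline i}$ (this uses that $\supp\varphi_j$ is closed and contained in $U_j$, so a simplex with image meeting $\supp\varphi_j$ has image in $U_j$), and that the resulting infinite sum over $j$ is locally finite when evaluated on any singular simplex $s\colon \Delta^q \to U_{\underline i}$ (this uses local finiteness of the cover of supports together with compactness of $\Delta^q$). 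Once these well-definedness points are settled, exactness is immediate: if $\delta_h^{p,q}(\psi) = 0$ then $\psi = \delta_h^{p-1,q}(h^{p,q}(\psi))$, and injectivity of $\delta_h^{-1,q}$ is clear from its definition. I would also remark that exactness does not require $\Phi$ to satisfy any condition beyond subordination and local finiteness, and in particular the argument is entirely parallel to the classical proof that the Čech–de Rham double complex has exact rows.
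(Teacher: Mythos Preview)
Your plan has a genuine gap at precisely the point you flagged as ``the genuine obstacle'': the product $\varphi_j \cdot \psi_{(j,\underline{i})}$ does not make sense for singular cochains. The coefficients are in an arbitrary ring $R$, so multiplying a cochain by a real-valued function $\varphi_j$ has no meaning unless $R$ happens to be an $\mathbb{R}$-algebra. Even if you restrict to $R=\mathbb{R}$ and interpret the product as a cup product with the $0$-cochain $\varphi_j$ (i.e.\ $(\varphi_j\smile\psi)(s)=\varphi_j(s(e_0))\psi(s)$), the extension-by-zero argument still fails: the condition $s(e_0)\in\supp\varphi_j\subseteq U_j$ does not force the whole image $s(\Delta^q)$ to lie in $U_j$, so $\psi_{(j,\underline{i})}(s)$ is simply undefined on such simplices. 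The \v{C}ech--de Rham analogy you invoke at the end works because differential forms form a sheaf of $C^\infty(X)$-modules; the singular cochain functor $U\mapsto C^q(U)$ is not a sheaf, and bump functions cannot be used to glue or extend.

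The paper avoids this entirely and needs no partition of unity (indeed the lemma holds for an arbitrary topological space, paracompact or not). Its argument decomposes the row complex as a product $C^{p,q}(\calU)=\prod_s C^{p,q}_s(\calU)$ over singular $q$-simplices $s$ whose image lies in some $U_i$; for each fixed $s$ the factor is the simplicial cochain complex of the full simplex on the vertex set $I(s)=\{i:s(\Delta^q)\subseteq U_i\}$, which is contractible. If you want to repair your homotopy approach, the correct formula is the one the paper writes down later as $K^{p,q}$: choose for each such simplex $s$ an index $i(s)\in I(s)$ and set $(h^{p,q}(\psi))_{\underline{i}}(s)=\psi_{(i(s),\underline{i})}(s)$. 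This is well-defined because $s$ lies in $U_{i(s)}\cap U_{\underline{i}}$ by construction, no multiplication is involved, and the identity $\delta_h h + h\delta_h=\id$ follows exactly as you outlined (the $k=0$ term now collapses to the identity tautologically, since there is a single index $j=i(s)$ rather than a sum).
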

\begin{proof}
 Let $s\colon \Delta^q\to X$ be a singular simplex such that $s(\Delta^q)$ is contained in $U_i$ for some $i\in\ I$. 
We set $$C^{-1,q}_s(\calU)=\{\varphi\in C^{-1,q}(\calU)\, |\, \varphi(s')=0\ \text{for\ every}\ s'\neq s\}\ ,$$ and for every $p\geq 0$ and every $\underline{i}\in I_p$  we define
$$
C^{p,q}_s(U_{\underline{i}})=\{\varphi\in C^{q}(U_{\underline{i}})\, |\, \varphi(s')=0\ \text{for\ every}\ s'\neq s\}\ .
$$
We also set 
 $I(s)=\{i\in I\, |\, s(\Delta^q)\subseteq U_i\}$, $I_p(s)=(I(s))^{p+1}\subseteq I_p$, and
$$
C^{p,q}_s(\calU)=\prod_{\underline{i}\in I_p(s)} C^{p,q}_s(U_{\underline{i}})
$$
(according to our definition, $C_s^{p,q}(U_{\overline{i}})=0$ whenever $\underline{i}\notin I_p(s)$). 
We observe that $C^{*,q}_s(\calU)$ is a subcomplex of $C^{*,q}(\calU)$, and that
$$C^{p,q}(\calU)=\prod_{s\colon \Delta^q\to X}  C^{p,q}_s(\calU)\ .$$
Hence, in order to conclude it is sufficient to show that each $C^{*,q}_s(\calU)$ is exact. However,
the complex $C^{*,q}_s(\calU)$ is isomorphic to the simplicial cohomology complex of the full simplex
with vertices $I(s)$, whence the conclusion.
\end{proof}

As a consequence of the previous lemma the cohomology groups of the complex $C^{-1,*}$ are isomorphic to 
the cohomology of the \emph{total complex} $T^*$ associated to the double complex. Recall that  $T^*$ is defined by setting
$$
T^n=	\bigoplus_{\substack{(p,q)\in\mathbb{N}^2 \\ p+q=n}} C^{p,q}(\calU)
$$
with differential
$ \delta^n\colon T^n\to T^{n+1}$ given by $\delta^n=\bigoplus_{p+q=n} (\delta_h^{p,q}+(-1)^p \delta_v^{p,q})
$. 
 The augmentation maps induce morphisms of complexes $\widetilde \alpha^*: C^{*}_\calU\to  T^*$ and
$\widetilde \beta^*\colon\widecheck C^*\to T^*$ and we denote by $\alpha$, $\beta$ the maps induced by $\alpha^*$, $\beta^*$ on cohomology.
By Lemma \ref{exactrows:lemma} 
$\alpha$ is an isomorphism in every degree  
and the  map $\alpha\circ\gamma\colon H^*(X)\to H^*(T^*)$ is the isomorphism $\alpha_X$ defined in the introduction.
We define $\zeta=\alpha^{-1}\circ\beta$ and $\eta=\alpha_X^{-1}\circ\beta\circ\nu$.

The notation introduced so far is summarized in the following diagram:
$$
\xymatrix{H^*(X) \ar^{\simeq}_{\gamma}[rr] \ar@/^1.5pc/[rrr]^{\alpha_X}& & H^*(C^{q}_\calU)\ar^{\simeq}_{\alpha}[r] & H^*(T) \\
& & & \widecheck{H}^*(\calU)=H^*(N(\calU))\ar_{\beta}[u]\ar[ul]^{\zeta}\\
& & & H^*(|N(\calU)|)\ar[uulll]^{\eta}\ar[u]_{\nu}\ .}
$$
When we want to stress the dependence of these constructions on the cover $\calU$ we write $\alpha_\calU$, $\beta_\calU$, etc.


\section{The case of a simplicial complex}\label{simpl:sing}

In this section we analyze the Mayer-Vietoris double complex when $X=|S|$ is the geometric realization of a simplicial complex $S$.
Let $I$ be the vertex set of $S$. We
consider the open cover $\calU^*=\{U^*_i\}_{i\in I}$ of $|S|$ given by the open stars of the vertices, i.e.~ for every $i\in I$ we set
$U_i=\{x\in |S|: x_i>0\}$, where $x_i$ denotes the barycentric coordinate of the point $x$ relative to the vertex $i$.
Observe  that the simplical complexes $N(\calU^*)$ and $S$ on the set of vertices $I$ are equal and we will identify them. 
Hence, in this case $\eta_{\calU^*}:H^*(|S|)\to H^*(|S|)$. Notice also that in this case all intersections $U_{\underline i}^*$ are contractible,
hence, also the columns of the augmented double complex are exact. As a consequence, $\beta$ and $\zeta$ are isomorphisms. The next proposition
shows that the map $\eta$ is the identity in this case.

\begin{prop}\label{prp:simplicialcase} If $S$ is a simplicial complex and $\calU^*$ is the  
cover described above then $\eta=\text{Id}$.
\end{prop}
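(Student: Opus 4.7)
The aim is to show $\eta = \id$, or equivalently $\alpha_X = \beta \circ \nu$.

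Since each intersection $U_{\underline{i}}^* = U_{i_0}^* \cap \ldots \cap U_{i_p}^*$ is the open star of the simplex $[i_0,\ldots,i_p]$ of $S$, it is star-convex with respect to its barycenter $b_{\underline{i}}$, hence contractible. This makes the columns of the augmented double complex exact as well, so $\beta$ is an isomorphism, and the proposition reduces to showing that for every singular $n$-cocycle $c$ on $|S|$, the elements
$$
\widetilde{\alpha}(\widetilde{\gamma}(c)) = (c|_{U_i^*})_{i\in I_0} \in C^{0,n}(\calU^*) \quad \text{and}\quad \widetilde{\beta}(\pi c) \in C^{n,0}(\calU^*),
$$
where $\pi c \in \widecheck{C}^n(\calU^*)$ is the Cech cocycle $(\pi c)_{(i_0,\ldots,i_n)} = c(\iota_{[i_0,\ldots,i_n]})$ representing $\nu([c])$, represent the same class in $H^n(T^*)$.

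The strategy is to construct an explicit $R$-linear operator $h\colon C^n(|S|) \to T^{n-1}$ satisfying the chain-homotopy identity
$$
d_T \circ h + h \circ \delta \;=\; \widetilde{\alpha}\circ\widetilde{\gamma} \;-\; \widetilde{\beta}\circ\pi,
$$
from which the proposition follows immediately. The $(p, n-1-p)$-component of $h(c)$ assigns to $\underline{i} = (i_0,\ldots,i_p) \in I_p$ and a singular simplex $s\colon \Delta^{n-1-p} \to U_{\underline{i}}^*$ the value $c(J_{\underline{i}}(s))$, where $J_{\underline{i}}(s)\colon \Delta^n \to |S|$ is the iterated affine cone obtained by successively joining $s$ with the barycenters $b_{(i_0,\ldots,i_p)}, b_{(i_0,\ldots,i_{p-1})}, \ldots, b_{(i_0)} = i_0$. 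Each successive cone is well-defined because $U_{(i_0,\ldots,i_k)}^*$ is star-convex with respect to $b_{(i_0,\ldots,i_k)}$ and contains the image of the previous stage.

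Verifying the chain-homotopy identity amounts to decomposing the boundary of $J_{\underline{i}}(s)$: the face equal to $s$ itself contributes (at $p = 0$) $c|_{U_{i_0}^*}$ and hence $\widetilde{\alpha}\widetilde{\gamma}(c)$; the faces omitting a single barycenter $b_{(i_0,\ldots,i_k)}$ align, after reindexing on $\underline{i}$, with the horizontal differential $\delta_h$ applied to the $(p{-}1)$-component of $h(c)$, and in the extremal case $p = n$ telescope into the affine simplex $\iota_{[i_0,\ldots,i_n]}$, producing $\widetilde{\beta}(\pi c)$; the remaining faces internal to $s$ assemble as $h(\delta c)$; all pieces match via the coboundary identity $c(\partial J_{\underline{i}}(s)) = (\delta c)(J_{\underline{i}}(s))$. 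The main obstacle is the sign bookkeeping and the combinatorial identification of the boundary faces of the iterated cone with the horizontal and vertical differentials of $T^*$ and with the two endpoint cochains $\widetilde{\alpha}\widetilde{\gamma}(c)$ and $\widetilde{\beta}(\pi c)$ — essentially a Cech-parametrised version of the classical prism argument that produces a contracting homotopy on a star-convex open set.
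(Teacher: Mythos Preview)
Your geometric setup is correct — each $U_{\underline i}^*$ is star-convex with respect to the barycenter $b_{\underline i}$, and an iterated cone as you describe is well-defined inside $|S|$.  The difficulty is not just bookkeeping: the operator $h$ you write down does \emph{not} satisfy the chain–homotopy identity.

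The mismatch is between the two kinds of ``faces'' that have to cancel.  A boundary face of $J_{\underline i}(s)$ obtained by omitting the single cone point $b_{(i_0,\ldots,i_k)}$ has remaining cone points
\[
i_0,\; b_{(i_0,i_1)},\;\ldots,\; b_{(i_0,\ldots,i_{k-1})},\; b_{(i_0,\ldots,i_{k+1})},\;\ldots,\; b_{(i_0,\ldots,i_p)} ,
\]
whereas the term of $\delta_h$ coming from the \v Cech face $(i_0,\ldots,\widehat{i_k},\ldots,i_p)$ produces $J_{(i_0,\ldots,\widehat{i_k},\ldots,i_p)}(s)$, whose cone points are the barycenters of the prefixes of the \emph{shortened} tuple.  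For $k<p$ these two sequences differ from the $k$-th spot onward (one has barycenters of $(k{+}2)$-, $(k{+}3)$-,\dots-element sets, the other of $(k{+}1)$-, $(k{+}2)$-,\dots-element sets), so they never coincide.  A direct check already at $n=2$, bidegree $(1,1)$, gives
\[
\big(d_Th(c)+h(\delta c)\big)_{(i_0,i_1)}(s)\;=\;c(i_1*s)-2\,c(i_0*s)+c(b_{(i_0,i_1)}*s),
\]
which is not zero (even for cocycles $c$, as one sees by writing $c=\delta g_j$ on the two stars and noting that the local primitives differ on the overlap).  So the proposed $h$ is not a homotopy between $\widetilde\alpha\widetilde\gamma$ and $\widetilde\beta\pi$.

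The paper avoids this obstruction by working in the other direction: it lifts $\zeta=\alpha^{-1}\beta$ via the \emph{row} homotopies $K^{p,q}$ (available for any cover) and then evaluates the resulting cochain on singular simplices that come from the barycentric subdivision $S'$, where a simplicial approximation $g\colon S'\to S$ of the identity makes the computation tractable (Lemma~\ref{key:lemma}).  The passage through $S'$ is what absorbs the combinatorics that your iterated cones cannot.
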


To prove this proposition we will perform a computation by describing a lift of $\zeta$ at the level of cochains.
To simplify the computations we will use \emph{alternating} cochains, whose definition is recalled below. 


\subsubsection*{Construction of $\widetilde \zeta$}
We start by describing a lift 
$$ \widetilde{\zeta}\colon \widecheck{C}(\calU)\to C^{-1,p}(\calU)=C^p_\calU$$
of the map $\zeta$ at the level of cochains.
We first construct chain homotopies
$$ K^{p,q}\colon C^{p,q}(\calU)\to C^{p-1,q}(\calU)\, , \quad p\geq 0\, ,\quad q\geq 0\ .$$
For each singular simplex $s$ 
with image contained in some open subset $U_i$ we fix an index $i(s)$ such that $\text{Im}\, s\subseteq U_{i(s)}$. 
For all $\varphi\in C^{p,q}(\calU)$ and for all singular simplices $s$ with image contained in 
$U_{\underline{i}}$ for some $\underline{i}\in I_{p-1}$, $p\geq 0$, we define 
$$
\big(K^{p,q}(\varphi)_{\underline{i}} \big)(s)=
\varphi_{i(s),\underline{i}}(s) 
$$
(when $p=0$ there is no index $\underline{i}$ and we just take $s\in C_q^{\calU}$). It is easy to check that 
$
\delta_h^{p-1,q}K^{p,q}+K^{p+1,q}\delta_h^{p,q}=\id
$
for every $p\geq 0$, $q\geq 0$. Hence, if we define 
$$
\widetilde{\zeta}=(-1)^{\frac{p(p+1)}{2}} \, K^{0,p}\circ\delta_v^{0,p-1}\circ K^{1,p-1} \circ \cdots \circ K^{p-1,1}\circ \delta_v^{p-1,0} \circ K^{p,0}\circ \delta_v^{p,-1}
$$
then for every cocycle $\varphi\in \widecheck{C}^{p}(\calU)$ we have $\zeta([\varphi])=[\widetilde \zeta(\varphi)]$ in $H^p(C^*_\calU)$.


\subsubsection*{Singular and algebraic simplices}
Let us now recall the construction of the isomorphism $\nu$ between the simplicial cohomology $H^*(S)$ of $S$ and the singular cohomology
$H^*(|S|)$ of its geometric realization. Let $C_*(S)$ be the chain complex of simplicial chains on $S$, 
i.e.~let $C_p$ be the free $R$-module with basis
$$\{(i_0,\ldots, i_p)\in I^{p+1}\, |\, \{i_0,\ldots,i_p\}\ \text{is\ a\ simplex\ of}\ S\}\ ,$$
and let $C^*(S)$ be the dual chain complex of $C_*(S)$. Elements of the basis just described are usually called \emph{algebraic} simplices.

For any algebraic simplex $\sigma=(i_0,\ldots,i_p)$ of $S$, one defines the singular simplex
$
\langle \sigma\rangle\colon \Delta^p\to |S| 
$
by setting
$$
\langle \sigma \rangle (t_0,\ldots,t_p)=t_0i_0+\dots+t_pi_p\ .
$$
The map $\sigma\mapsto \langle \sigma\rangle$ extends to a chain map $C_*(S)\to C_*(|S|)$, whose dual map $\widetilde \nu:C^*(|S|)\to C^*(S)$ induces the 
isomorphism $ \nu\colon H^*(|S|)\to H^*(S)$
(see e.g.~\cite{Hat} Theorem 2.27). We write $\nu_S,\widetilde \nu_S$ when we want to stress the dependence on the simplicial complex.


\subsubsection*{Alternating cochains}To compute $\zeta\circ \nu$  it is convenient to use \emph{alternating} cochains. 
Let $\mathfrak{S}_{p+1}$ be the permutation group of $\{0,\dots,p\}$. We say that a simplicial cochain $\varphi\in C^p(S)$ is alternating if 
$\varphi(i_{\tau(0)},\dots,i_{\tau(p)})=\varepsilon(\tau)\varphi(i_0,\dots,i_p)$
for every $\tau\in\mathfrak{S}_{p+1}$, and $\varphi(i_0,\dots,i_p)=0$ whenever $i_j=i_{j'}$ for some $j\neq j'$. 
Alternating cochains form a subcomplex of 
the complex of cochains which is homotopy equivalent to the full complex (see e.g.~\cite[Chap.20, Section 23]{StPr}).

Alternating cochains may be defined also in the context of singular homology as follows. 
For every  $\tau\in\mathfrak{S}_{p+1}$  denote by $\rho_\tau\colon \Delta^p\to \Delta^p$ the 
affine automorphism of $\Delta^p$ defined by $\rho_\tau(t_0,\dots,t_p)=(t_{\tau(0)},\dots,t_{\tau(p)})$. 
If $X$ is a topological space,
we say that a singular cochain
$\varphi\in C^p(X)$ is \emph{alternating} if $\varphi(s\circ \rho_\tau)=\varepsilon(\tau)\, \varphi(s)$ for every $\tau\in\mathfrak{S}_{p+1}$ 
and every singular simplex $s\colon \Delta^p\to X$,
and $\varphi(s)=0$ for every singular simplex $s$ such that $s=s\circ \rho_\tau$ for an odd permutation $\tau\in\mathfrak{S}_{p+1}$. 
Alternating singular cochains form a subcomplex the complex of singular cochains which 
is homotopy equivalent to the full complex (see e.g.~\cite{bar}). 
Moreover, the map $\widetilde{\nu}$ introduced above sends alternating singular cochain to alternating simplicial cochains,
and both the homotopy maps $K^{p,q}$ and the vertical differential send alternating cochains to alternating ones.

\medskip

We want to compute $\widetilde{\zeta}(\varphi)$ on singular simplices of the form 
$\langle\sigma\rangle$, as $\sigma$ varies among the algebraic simplices of $S$. 
However, simplices of $S$ are not contained in any $U_i^*$. 
We will then make use of the barycentric subdivision $S'$ of $S$, together with a suitable simplicial approximation of the identity $S'\to S$. 
Let $I'$ be the set of vertices of $S'$. This set is in bijective correspondence with the set of
simplices of $S$: for $i'\in I'$ we denote by $\Delta_{i'}$ the simplex of 
$S$ of which $i'$ is the barycenter; in the opposite direction, if $\Delta$ is a simplex of $S$ we denote by $i'_\Delta$ its barycenter. 
The $p$-simplices of $S'$ are then the subsets $\{i_0',\dots,i_p'\}$ where
$\Delta_{i_0'}\subset \dots \subset \Delta_{i_p'}$.

If for every simplex $\Delta$ of $S$ we denote by $b_\Delta\in |S|$ the geometric barycenter of  $\Delta$ 
then the map $b\colon |S'|\to |S|$ defined by $b(\sum_{\Delta }t_{\Delta}i'_{\Delta})=\sum_{\Delta} t_\Delta b_\Delta$ is a homeomorphism,
and we will identify the geometric realization of $S'$ and $S$ via this map. We construct a second map from $|S'|$ to $|S|$ as follows.
We fix an auxiliary total ordering on $I$, and  we define a simplicial map $g\colon S'\to S$
by setting
$$g(i')=\max \Delta_{i'} $$ 
for every vertex $i'$ of $S'$. The geometric realization $|g|\colon |S|=|S'|\to |S|$
of $g$ is homotopic to $b$ via the homotopy $tb+(1-t)|g|$, $t\in [0,1]$. 



We may define the map
$i$ used to construct the homotopies $K^{p,q}$ in such a way that, for every algebraic simplex $\sigma'=(i'_0,\ldots,i'_p)$ of $C_*(S')$,
$$
i(\langle \sigma' \rangle)=\min \{g(i'_0),\dots,g(i'_p)\}\ .
$$
For simplicity, we will denote $i(\langle \sigma' \rangle)$ by $i(\sigma')$.
With this choice, the singular simplex 
$\langle \sigma' \rangle$ is supported in
$U_{i(\sigma')}^*$ as required in the definition of the 
map $i$.

Let $\alpha=(\alpha_{\underline i})\in C^{h,k}(\calU^*)$ and let  $\sigma'=(i'_0,\dots,i'_{k+1})\in  C_{k+1}(U_{\underline i}^*)$, $\underline i\in I_h$, be an  algebraic $(k+1)$-simplex of $S'$. 
If $\partial_h \sigma'=(i'_0,\dots,\widehat{i}'_h,\dots,i'_{k+1})$ denotes the algebraic $h$-th face of $\sigma'$, then
\begin{align}\label{formula}
\begin{split}
\big(  \delta^{h-1,k}_v K^{h,k} (\alpha)  \big) (\langle \sigma' \rangle)
 & =\sum_{h=0}^{k+1} (-1)^h \big(K^{h,k}(\alpha)\big)_{\underline i} (\langle\partial_h \sigma'\rangle) \\
 & =\sum_{h=0}^{k+1} (-1)^h \alpha_{i(\partial_h\sigma'),\underline i} (\langle\partial_h \sigma'\rangle )\ .
 \end{split}
\end{align}



\begin{lemma}\label{key:lemma}
Let $\varphi$ be an alternating cocycle in $C^p(N(\calU^*))=\widecheck{C}^{p}(\calU^*)$, and let $\sigma'\in C_p(S')$ be an algebraic simplex.
Then
$$
\big(\widetilde\zeta(\varphi)\big)(\langle\sigma'\rangle)=\varphi(g_*(\sigma'))\ ,
$$
where $g_*\colon C_p(S')\to C_p(S)$ is the map induced by $g\colon S'\to S$. 
\end{lemma}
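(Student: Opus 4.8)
The plan is to compute $\widetilde\zeta(\varphi)(\langle\sigma'\rangle)$ directly from the definition of $\widetilde\zeta$ as the composition $(-1)^{p(p+1)/2}\,K^{0,p}\circ\delta_v^{0,p-1}\circ\cdots\circ K^{p,0}\circ\delta_v^{p,-1}$, by tracking how each application of $\delta_v^{h,k-1}\circ K^{h,k}$ transforms a $\check{C}$-type cochain into a cochain on barycentric subdivisions. The starting point is that $\delta_v^{p,-1}(\varphi)\in C^{p,0}(\calU^*)$ is the cochain which, on any $0$-simplex $s$ landing in $U_{\underline i}^*$, returns the constant $\varphi_{\underline i}\in R$. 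I would then argue by (downward) induction on $h$ that after applying the first few maps $K^{h,k}\circ\delta_v^{h,k-1}\circ\cdots$ one obtains, up to the sign bookkeeping, a cochain in $C^{h-1,p-h}(\calU^*)$ whose value on $\langle\sigma'\rangle$ (for $\sigma'$ an algebraic $(p-h)$-simplex of $S'$ supported in $U_{\underline i}^*$, $\underline i\in I_{h-1}$) equals an alternating sum $\sum_\tau \pm\,\varphi(\text{something built from }i(\cdot)\text{ and }\underline i)$ over suitable faces. Formula \eqref{formula} is exactly the one-step recursion: each composite $\delta_v^{h-1,k}\circ K^{h,k}$ turns a value indexed by an $(k+1)$-simplex into the alternating sum over its faces of values with an extra front index $i(\partial_h\sigma')=\min\{g\text{ of the remaining vertices}\}$.

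The key combinatorial observation, which I expect to be the heart of the argument, is the following. Iterating formula \eqref{formula} $p$ times starting from $\sigma'=(i'_0,\dots,i'_p)$ produces a sum over all ways of peeling off vertices one at a time; the summand corresponding to a maximal flag of faces $\sigma' = \sigma'_p \supset \sigma'_{p-1}\supset\cdots\supset\sigma'_0$ (a single vertex) contributes, after plugging in $i(\sigma'_j)=\min\{g(i'_\ell):i'_\ell\in\sigma'_j\}$ and collecting the Koszul signs from both the $K$'s and the $(-1)^p$-twisted vertical differentials, a term $\pm\,\varphi\big(i(\sigma'_0),i(\sigma'_1),\dots,i(\sigma'_p)\big)$. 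Because $\varphi$ is \emph{alternating} and the $(p+1)$-tuple $(i(\sigma'_0),\dots,i(\sigma'_p))$ has a repeated entry whenever two consecutive faces in the flag have the same $g$-minimum, all such degenerate flags die. The surviving flags are exactly those in which the $g$-minimum strictly drops at each step; for a fixed algebraic simplex $\sigma'$ the corresponding tuple $(i(\sigma'_0),\dots,i(\sigma'_p))$ is, after reordering, precisely $(g(i'_0),\dots,g(i'_p))$ — i.e.\ $g_*(\sigma')$ — and one checks that the sign produced by the composition matches the sign $\varepsilon$ relating the chosen order to $g_*(\sigma')$, so that all surviving flags contribute the \emph{same} value $\varphi(g_*(\sigma'))$. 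The remaining point is then to count the surviving flags: I claim there is exactly one flag achieving each reordering, or more precisely the net count (with signs) is $1$; this follows because a strictly $g$-decreasing flag is determined by the order in which vertices are removed, and the cocycle condition on $\varphi$ together with the alternating property forces the multiplicities to collapse to the single term $\varphi(g_*(\sigma'))$.

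The main obstacle is precisely this sign-and-multiplicity bookkeeping: one must show that the global sign $(-1)^{p(p+1)/2}$ together with the $(-1)^h$ factors in \eqref{formula} and the $(-1)^p$ twist in the total differential conspire so that (a) each nondegenerate flag contributes $+\varphi(g_*(\sigma'))$ with the correct sign relative to the fixed ordering of $\sigma'$, and (b) the degenerate (repeated-index) contributions cancel in pairs via the cocycle identity $\delta\varphi=0$ applied on the $(p+1)$-tuple. I would organize this by first treating $p=0,1$ by hand to fix conventions, then doing the induction: assume the formula $\big(\delta_v^{h,k}K^{h+1,k}\cdots\big)(\langle\tau'\rangle) = \pm\varphi\big(\underline i, (g\text{-values of a }g\text{-decreasing completion})\big)$ summed appropriately, and verify that one more application of \eqref{formula} preserves its shape while using the alternating property to discard the newly-created repeated-index terms. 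A clean way to package it is to interpret $\widetilde\zeta(\varphi)$ as pairing $\varphi$ against a chain in $\check C_*$ that is the image under a standard "last-vertex" chain homotopy of $\sigma'$; the simplicial-approximation choice $g(i')=\max\Delta_{i'}$ together with $i(\sigma')=\min g$ is exactly what makes this chain equal to $g_*(\sigma')$ on the nose in alternating homology. Once the bookkeeping is done the statement follows.
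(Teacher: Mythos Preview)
Your overall plan---iterate formula~\eqref{formula} to express $\widetilde\zeta(\varphi)(\langle\sigma'\rangle)$ as a signed sum over maximal flags $\sigma'_0<\sigma'_1<\cdots<\sigma'_p=\sigma'$ of terms $\varphi_{i(\sigma'_0),\dots,i(\sigma'_p)}$, then use the alternating property to kill most terms---is exactly the paper's approach. However, you are missing the observation that makes the argument clean, and you are invoking a hypothesis that is never needed.

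The key point you do not isolate is this: since $\sigma'_{\ell}$ is a face of $\sigma'_{\ell+1}$ and $i(\tau')=\min\{g(i'_j):i'_j\in\tau'\}$, one has \emph{automatically}
\[
i(\sigma'_0)\;\geq\;i(\sigma'_1)\;\geq\;\cdots\;\geq\;i(\sigma'_p),
\]
with all values lying in $\{g(i'_0),\dots,g(i'_p)\}$. Because $\varphi$ is alternating, a summand survives only if these $p+1$ values are pairwise distinct, which forces $g(i'_0),\dots,g(i'_p)$ to be distinct and forces $i(\sigma'_\ell)$ to be the $(\ell+1)$-th largest among them. This pins down the flag \emph{uniquely}: at each step the vertex removed must be the one realizing the current minimum of $g$. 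There is no ``net count with signs'' to analyze and no multiplicity to collapse---exactly one flag survives. The paper streamlines the sign check by first using the alternating property to reduce to the case $\Delta_{i'_0}\subsetneq\cdots\subsetneq\Delta_{i'_p}$, so that $g(i'_0)<\cdots<g(i'_p)$; the unique surviving flag is then $\overline\sigma'_\ell=(i'_{p-\ell},\dots,i'_p)=\partial_0\overline\sigma'_{\ell+1}$, every face sign is $(-1)^0$, and the global $(-1)^{p(p+1)/2}$ is exactly the sign of reversing $(i_p,\dots,i_0)$ to $(i_0,\dots,i_p)$. Your plan to use the cocycle condition $\delta\varphi=0$ to cancel ``degenerate'' contributions is a red herring: those terms vanish outright by the alternating hypothesis, and the cocycle condition plays no role in the proof.
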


\begin{proof} Let $\sigma'=(i'_0,\dots,i_p')$ and set $\Delta_\ell=\Delta_{i'_\ell}$ for $\ell=0,\dots,p$ and
$i_\ell=g(i'_\ell)$. 
Recall that simplices of $S'$ corresponds to comparable subsets of a simplex of $S$. Moreover,
since $\varphi$ is alternating, both $g^*(\varphi)$ and $\widetilde{\zeta}(\varphi)$ are alternating, thus in order to check that the equality of the statement holds we may assume that 
$$
\Delta_0\subsetneq \Delta_1 \dots \subsetneq \Delta_p\ .
$$
By definition we have $i_\ell=\max \Delta_\ell$, hence in particular
$i_0\leq i_1\leq\dots\leq i_p$. 
Since $\varphi$ is alternating, this implies at once that
\begin{equation}\label{phig}
\varphi(g_*(\sigma'))=\begin{cases}
\varphi_{i_0,i_1,\dots,i_p} &\text{if } i_0<\dots<i_p \\
 0 &\text{otherwise} .
 \end{cases}
\end{equation}

Let us now compute $\big(\widetilde\zeta(\varphi)\big)(\langle\sigma'\rangle)$. 
For every algebraic simplex $\tau_k'\in C_k(S')$, we write $\tau_{k-1}'<\tau'_k$ if $\tau_{k-1}'$ is an algebraic face of $\tau'_k$, i.e.~if
there exists $h=0,\ldots,k$ such that $\tau'_{k-1}=\partial_h \tau'_k$. 
By iterating~\eqref{formula} we  get
\begin{equation}\label{zeta}
\big(\widetilde\zeta(\varphi)\big)(\langle\sigma'\rangle)=
(-1)^{\frac{p(p+1)}2}\sum_{\sigma'_0< \dots < \sigma'_p=\sigma'} \pm \varphi_{i(\sigma'_0),i(\sigma'_1),\dots ,i(\sigma'_p)}\ .
\end{equation}
Let now $\sigma'_0< \dots < \sigma'_p$ be a fixed descending sequence of faces of $\sigma'$. 
Since the map $i$ is given by taking a minimum we have $i(\sigma'_0)\geq i(\sigma'_1)\geq \dots \geq i(\sigma'_p)$ and all these elements belong to the set $\{i_0,\dots,i_p\}$.  Hence if $\varphi_{i(\sigma'_0),i(\sigma'_1),\dots ,i(\sigma'_p)}\neq 0$ we have $i_0<\dots<i_p$ and $i (\sigma'_\ell)=i_{p-\ell}$ for every $\ell$. In particular $\big(\widetilde\zeta(\varphi)\big)(\langle\sigma'\rangle)$ agrees with $\varphi(g_*(\sigma'))$ in the second case of formula \eqref{phig}.

Assume now $i_0<\dots<i_p$. As just observed, if $\varphi_{i(\sigma'_0),i(\sigma'_1),\dots ,i(\sigma'_p)}\neq 0$ then
 $i (\sigma'_\ell)=i_{p-\ell}$ for every $\ell$, and this readily implies that 
the unique 
non-trivial addend in the right-hand sum in~\eqref{zeta} corresponds to the sequence 
$$
\overline{\sigma}'_{0}=(i'_p),\quad  \overline{\sigma}'_{1}=(i'_{p-1},i'_{p}),  \quad \dots\quad ,\overline{\sigma}'_{p}=(i'_0,\dots, i'_{p-1},i'_{p})\ .
$$
In particular, for every $j=0,\dots,p-1$ we have $\overline{\sigma}'_{j}=(-1)^0\partial_0 \overline{\sigma}'_{j+1}$. Hence
\begin{gather*}
\big(\widetilde\zeta(\varphi)\big)(\langle\sigma'\rangle)=
(-1)^{\frac{p(p+1)}2}\varphi_{i(\overline{\sigma}'_0),i(\overline{\sigma}'_1),\dots ,i(\overline{\sigma}'_p)}\\
=(-1)^{\frac{p(p+1)}2}\varphi_{i_p,i_{p-1},\dots,i_0}
=\varphi_{i_0,i_{1},\dots,i_p}
\end{gather*}
settling also the first case in formula \eqref{phig}.
\end{proof}

Before proving the proposition we notice that the map $C_*(S)\to C_*(|S|)$ constructed above does not factor through 
$C^{\calU^*}_*$ because no positive-dimen\-sion\-al simplex of $S$ is contained in $U_i^*$ for any $i\in I$. However
the analogous map from $C_*(S')$ to $C_*(|S|)$ does. Hence the map $\widetilde \nu_{S'}\colon C^{*}(|S'|)\to C^*(S')$ factors
as $\widetilde \nu_{S'}= \widetilde \mu \circ \widetilde \gamma$, where
$\widetilde{\gamma}\colon C^{*}(|S'|)\to C^*_{\calU^*}$ is the map defined in Section~\ref{double:sec}, and
  $\widetilde \mu\colon C_{\calU^*}^* \to C^*(S')$. We denote by $\mu\colon H^*(C_{\calU^*}^*)\to H^*(S)$
  the map induced by $\widetilde{\mu}$ on cohomology.

\begin{proof}[Proof of Proposition \ref{prp:simplicialcase}]
 Being $\nu_{S'}:H^*(|S|)=H^*(|S'|)\to H^*(S')$ injective and $|g|$ homotopic to the identity, in order
 to prove the proposition it is sufficient to show that $\nu_{S'}\circ\eta=\nu_{S'}\circ |g|^*$. Now recall that 
 $\eta=\gamma^{-1}\circ\zeta\circ\nu_S$, hence $\nu_{S'}\circ\eta=\mu\circ\zeta\circ\nu_{S}$. Hence it is enough to prove that
 $\mu(\zeta(\nu_S(c)))=\nu_{S'}(|g|^*(c))$ for all $c\in H^p(|S|)$ or, equivalently, that
 $$\widetilde \mu (\widetilde \zeta(\widetilde \nu_S(\psi)))(\sigma')=\widetilde \nu_{S'}(|g|^*(\psi))(\sigma')$$ where 
 $\psi\in C^p(|S|)$ is a cocycle and $\sigma'$ is any algebraic simplex of $S'$. 
 Morover, as observed above we can choose $\psi$ to be alternating. However, if we set $\phi=\widetilde \nu_{S}(\psi),$ then
\begin{gather*}
\mu(\widetilde \zeta(\widetilde \nu_S(\psi)))(\sigma')= \mu(\widetilde \zeta(\phi))(\sigma')=
\big(\widetilde\zeta(\varphi)\big)(\langle\sigma'\rangle)\ ,\\
\widetilde \nu_{S'}(|g|^*(\psi))(\sigma')=(|g|^*(\psi))(\langle \sigma'\rangle )=\psi(|g|_*(\langle \sigma'\rangle ))=\varphi (g_*(\sigma'))\ ,
\end{gather*}
hence the conclusion follows from Lemma~\ref{key:lemma}.
\end{proof}



\section{Proof of Theorem \ref{main:thm}}

We can now prove the Theorem stated in the introduction. We first notice  that the construction of $\eta$ is compatible with continuous maps in the 
following sense.

\begin{lemma}\label{commutes1}
 Let $h\colon Y\to Z$ be a continuous map, and let $\calV=\{V_i\}_{i\in I}$, $\calW=\{W_i\}_{i\in I}$ be open covers of $Y,Z$, respectively, 
 such that $h(V_i)\subseteq W_i$ for every $i\in I$. The identity of the set $I$ extends to a simplicial map $N(h)\colon N(\calV)\to N(\calW)$, 
 and in particular it induces a continuous map $\hat h:|N(\calV)|\to |N(\calW)|$. Then the following diagram commutes:
 $$
 \xymatrix{
 {H}^*(|N(\calW)|)\ar[r]^{\hat h^*}\ar[d]^{\eta_\calW} & {H}^*(|N(\calV)|)\ar[d]^{\eta_\calV}\\
  H^*(Z)\ar[r]^{h^*}& H^*(Y)\ .
 }
 $$
\end{lemma}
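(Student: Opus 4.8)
The plan is to show that the pair $(h,\{h(V_i)\subseteq W_i\})$ induces a morphism from the whole augmented double complex attached to $\calW$ to the one attached to $\calV$, compatible with every augmentation, so that the statement follows by chasing the diagram displayed at the end of Section~\ref{double:sec}.

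First I would set up the induced maps. Since $h(V_i)\subseteq W_i$, for every $\underline i=(i_0,\dots,i_p)$ with $V_{\underline i}\neq\emptyset$ we get $h(V_{\underline i})\subseteq W_{\underline i}$, hence $W_{\underline i}\neq\emptyset$; thus $I_p(\calV)\subseteq I_p(\calW)$ for every $p$, and $N(h)\colon N(\calV)\to N(\calW)$ is a well defined simplicial map with $|N(h)|=\hat h$. Pulling singular cochains back along $h\colon V_{\underline i}\to W_{\underline i}$ and keeping only the coordinates indexed by $I_p(\calV)$ defines $R$-linear maps $C^{p,q}(\calW)\to C^{p,q}(\calV)$; these commute with the vertical differential (which is the plain singular coboundary) and with the horizontal differential~\eqref{eq:dh} (because $h$ is compatible with the covers, so pullback commutes with the restrictions appearing there), hence they form a morphism of double complexes and induce a chain map $h_T\colon T^*(\calW)\to T^*(\calV)$. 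On the augmentations: $h$ sends a singular simplex of $C_*^\calV$ to one of $C_*^\calW$, so dualization yields a chain map $h_C\colon C^*_\calW\to C^*_\calV$; the map it induces on the Čech subcomplexes $\widecheck C^*(\calW)\to\widecheck C^*(\calV)$ is simply the projection onto the coordinates in $I_\bullet(\calV)$, which is exactly $N(h)^*$ under the tautological identification $\widecheck C^*(\calU)=C^*(N(\calU))$; and all of these maps are compatible with the augmentation maps $\delta^{-1,q}$ and $\delta^{p,-1}$, as one checks directly from their definitions. Finally, $\widetilde\gamma$ is induced by the inclusion $C_*^\calU\hookrightarrow C_*(\cdot)$ of singular chains and $\widetilde\nu$ by the chain map $\sigma\mapsto\langle\sigma\rangle$; both constructions are natural, so that $h_C\circ\widetilde\gamma_\calW=\widetilde\gamma_\calV\circ h^*$ on $C^*(Z)$, and $N(h)^*\circ\widetilde\nu=\widetilde\nu\circ\hat h^*$ on $C^*(|N(\calW)|)$, the latter because $|N(h)|\circ\langle\sigma'\rangle=\langle N(h)_*\sigma'\rangle$ for every algebraic simplex $\sigma'$ of $N(\calV)$.

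Then I would assemble the pieces. Passing to cohomology, the maps induced by $h$ make every square of the Section~\ref{double:sec} diagram commute; in particular $h_T\circ\beta_\calW=\beta_\calV\circ N(h)^*$, $h_T\circ\alpha_\calW=\alpha_\calV\circ h_C$, $h_C\circ\gamma_\calW=\gamma_\calV\circ h^*$, and $\nu_\calV\circ\hat h^*=N(h)^*\circ\nu_\calW$, where $\alpha_\calW,\alpha_\calV$ are isomorphisms by Lemma~\ref{exactrows:lemma} and $\gamma_\calW,\gamma_\calV$ are isomorphisms by \cite[Proposition 2.21]{Hat}. Since $\eta_\calU=\gamma_\calU^{-1}\circ\alpha_\calU^{-1}\circ\beta_\calU\circ\nu_\calU$, composing these four identities in order gives $\eta_\calV\circ\hat h^*=h^*\circ\eta_\calW$, which is the claim. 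The only steps that require genuine care — and hence the main, purely bookkeeping, obstacle — are the verifications that the maps induced by $h$ commute with the horizontal differential and with the two families of augmentation maps notwithstanding that $I_\bullet(\calV)$ is only a subset of $I_\bullet(\calW)$, and the identification of the map induced on the Čech rows with $N(h)^*$.
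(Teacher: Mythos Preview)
Your proposal is correct and follows essentially the same approach as the paper's proof: you show that $h$ induces a morphism of the entire augmented double complex (and hence commutes with $\alpha$, $\beta$, $\gamma$, $\nu$), then chase the defining diagram for $\eta$. The paper's version is more compressed---it packages the $\alpha$ and $\beta$ commutations into the single identity $\zeta_\calV\circ N(h)^*=h^*\circ\zeta_\calW$---but the content is the same, and your explicit identification of the induced map on Čech rows with $N(h)^*$ and your attention to the inclusion $I_\bullet(\calV)\subseteq I_\bullet(\calW)$ are points the paper leaves implicit.
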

\begin{proof}
 By considering the restriction of $h$ to the open subset $V_i$ the map $h$ 
 induces a morphism $\{h^{p,q}\}$ between the double complex associated to $\calW$ and the double complex associated to $\calV$ and 
 between their augmentations. Hence we have $\zeta_\calV \circ N(h)^*=h^*\circ \zeta_\calW:H^*(N(\calW))\to H^*(C^*_\calV)$. 
We also have $\widetilde \gamma_\calV \circ h^*= h^{-1,*}\circ \widetilde \gamma_\calW$ and 
 by the definition of the map $\nu$ we have $\nu_\calV \circ h^*= N(h)^*\circ \nu_\calW $. By the definition of $\eta$, these three
 commutations imply the commutativity claimed in the lemma.
\end{proof}

We can now conclude the proof of our main theorem. Let $\calU=\{U_i\}_{i\in I}$ be an open cover of the space $X$, let $N(\calU)$ be the  nerve of $\calU$,
and let  $\calU^*=\{U_i^*\}_{i\in I}$ be the open cover of $|N(\calU)|$ given by the open stars of the vertices of $N(\calU)$. 
Let $f_\Phi\colon X\to |N(\calU)|$ be the map associated to a partition
of unity subordinate to $\calU$ as described in the introduction. We would like to apply the previous lemma to the covers 
$\calU$ of $X$ and  $\calU^*$ of $|N(\calU)|$ and to the map $h=f_\Phi$,
but the containment $f_\Phi(U_i)\subseteq U_i^*$ does not hold in general. Therefore, we consider the cover
$\widetilde{\calU}=\{\widetilde{U}_i\}_{i\in I}$ of $X$ defined by $\widetilde{U}_i=f_\Phi^{-1}(U_i^*)$ for every $i\in I$. 

We can now apply  Lemma~\ref{commutes1} to the map $h=f_\Phi$ and to the covers $\calV=\widetilde\calU$ and $\calW=\calU^*$. 
Since
$\widetilde{U}_i\subseteq U_i$ for every $i\in I$, Lemma~\ref{commutes1} also applies to the case when $h=i_X$ is the identity map of $X$,  and to the covers 
$\calV=\widetilde\calU$ and $\calW=\calU$. Hence we obtain the following commutative diagrams:
$$ \xymatrix{
 H^*(|N(\calU^*)|)\ar[r]^{\hat f_\Phi^*}\ar[d]^{\eta_{\calU^*}} & H^*(|N(\widetilde \calU)|)\ar[d]^{\eta_{\widetilde\calU}} 
 & H^*(|N(\calU)|)\ar^{\eta_\calU}[d]\ar^{\hat i_X^*}[r] & H^*(|N(\widetilde \calU)|)\ar[d]^{\eta_{\widetilde\calU}}\\
 H^*(|N(\calU)|)\ar[r]^{f_\Phi^*}                               & H^*(X)                                          
 & H^*(X) \ar@{=}[r] & H^*(X)\ .}
$$
As already noticed in the previous section the simplicial complexes $N(\calU)$ and $N(\calU^*)$ with set of vertices $I$ are equal 
and, by construction, so are the simplicial  maps $N(i_X)$ and $N(f_\Phi)$ from $N(\widetilde \calU)$ to $N(\calU^*)=N(\calU)$. In particular 
$\hat f_\Phi^*=\hat i_X^*$. Finally
by Proposition \ref{prp:simplicialcase} $\eta_{\calU^*}$ is the identity. Hence 
$$f_\Phi^* = f_\Phi^*\circ\eta_{\calU^*}=\eta_{\widetilde\calU}\circ \hat f_\Phi^*=\eta_{\widetilde\calU}\circ \hat i_X^*=\eta_\calU\ ,
$$
which proves the theorem. 

\bibliographystyle{amsalpha}
\bibliography{biblionote}

\providecommand{\bysame}{\leavevmode\hbox to3em{\hrulefill}\thinspace}
\providecommand{\MR}{\relax\ifhmode\unskip\space\fi MR }
\providecommand{\MRhref}[2]{%
  \href{http://www.ams.org/mathscinet-getitem?mr=#1}{#2}
}
\providecommand{\href}[2]{#2}
\begin{thebibliography}{{Sta}18}

\bibitem[Bar95]{bar}
M.~Barr, \emph{Oriented singular homology}, Theory Appl. Categ. \textbf{1}
  (1995), 1--9.

\bibitem[Hat02]{Hat}
A.~Hatcher, \emph{Algebraic topology}, Cambridge University Pres, Cambridge,
  2002.

\bibitem[Iva]{ivanov3}
N.~V. Ivanov, \emph{Notes on the bounded cohomology theory}, arXiv:1708.05150.

\bibitem[Iva87]{Ivanov}
\bysame, \emph{Foundation of the theory of bounded cohomology},
  J.~Soviet.~Math. \textbf{37} (1987), 1090--1114.

\bibitem[{Sta}18]{StPr}
The {Stacks Project Authors}, \emph{\textit{Stacks Project}},
  \url{https://stacks.math.columbia.edu}, 2018.

\end{thebibliography}



\end{document}